\newtheorem{theorem}{Theorem}[section]
\newtheorem{lemma}[theorem]{Lemma}
\theoremstyle{definition}
\title{On verbal subgroups in finite and profinite groups}
\author{Cristina Acciarri and Pavel Shumyatsky}
\address[C.~Acciarri]{Department of Mathematics, University of Brasilia, Brasilia-DF, 70910-900 Brazil}\email{acciarricristina@yahoo.it}
\address[P.~Shumyatsky]{Department of Mathematics, University of Brasilia, Brasilia-DF, 70910-900 Brazil}\email{pavel@unb.br}
\subjclass[2010]{Primary  20E18;  Secondary 20F14}
\keywords{Profinite groups, verbal subgroups, coverings, multilinear commutators}
\thanks{This research was supported by CNPq-Brazil.}
\begin{document}

\begin{abstract}
Let $w$ be a multilinear commutator word. In the present paper we describe recent results that show that if $G$ is a profinite group in which all $w$-values are contained in a union of finitely (or in some cases countably) many subgroups with a prescribed property, then the verbal subgroup $w(G)$ has the same property as well. In particular, we show this in the case where the subgroups are periodic or of finite rank.
\end{abstract}

\maketitle

\section*{Introduction}
Let $w$ be a group-word, and let $G$ be a group. The verbal subgroup $w(G)$ of $G$ determined by $w$ is the subgroup generated by the set of all values $w(g_1,\ldots,g_n)$, where $g_1,\ldots,g_n$
are elements of $G$. Most of the words considered in this paper are {\it multilinear commutators}, also known under the name of {\it outer commutator words}.  These are words that have a form of a multilinear Lie monomial, i.e., they are constructed by nesting commutators using always different indeterminates.   For example the word  $$[[x_1,x_2],[y_1,y_2,y_5],z]$$ is a multilinear commutator  while the Engel word  $$[x,y,y,y]$$ is not. 

 An important family of multilinear commutators are the lower central words $\gamma_k$,
given by
\[
\gamma_1=x_1,
\qquad
\gamma_k=[\gamma_{k-1},x_k]=[x_1,\ldots,x_k],
\quad
\text{for $k\ge 2$.}
\]
The corresponding verbal subgroups $\gamma_k(G)$ are the terms of the lower central series of $G$.  Another distinguished sequence of multilinear  commutators are the derived words $\delta_k$, on $2^k$ variables, which are defined recursively by 
\[
\delta_0=x_1,
\quad
\delta_k=[\delta_{k-1}(x_1,\ldots,x_{2^{k-1}}),\delta_{k-1}(x_{2^{k-1}+1},\ldots,x_{2^k})],\quad
\text{for $k\ge 1$.}
\] 
The verbal subgroup that corresponds to the word $\delta_k$ is the familiar $k$th derived subgroup of $G$ usually denoted by $G^{(k)}$.

 A word $w$ is  concise  in a class of groups $\mathcal{C}$ if whenever the set of all $w$-values in $G$ is finite, it always follows that $w(G)$ is finite, for every group $G \in \mathcal{C}$. P. Hall asked whether every word is concise, but it was later proved by Ivanov \cite{I} that this problem has a negative solution (see also \cite{O}, p.\ 439). On the other hand, many relevant words are known to be concise. For instance, Turner-Smith \cite{T2} showed that the lower central words $\gamma_k$ and the derived words $\delta_k$ are concise.  In 1974 Wilson \cite{Wil} extended the result to all multilinear commutator words. New proofs of the results on $\gamma_k$ and $\delta_k$ were obtained in \cite{BKS} and for arbitrary multilinear commutators in \cite{FM}. Turner-Smith also proved that every word is concise in the class formed by the residually finite groups all of whose quotients are again residually finite \cite{T2}. In 1967 Merzljakov \cite{Mer} answered P. Hall's question in the affirmative for the class of linear groups. It is an open question, due to Segal \cite{Se}, whether in the class of residually finite groups every word is concise. 

There are several natural ways to look at Hall's question from a
different angle. The circle of problems arising in this context can be
characterized as follows. 

Given a word $w$ and a group $G$, assume that certain restrictions are
imposed on the set of all $w$-values in $G$. How does this influence the structure of the
verbal subgroup $w(G)$?

In the present paper we describe some recent results related to the above question.  We concentrate on results that show that if $G$ is a profinite group in which all $w$-values are contained in a union of finitely (or in some cases countably) many subgroups with a prescribed property, then the verbal subgroup $w(G)$ has the same property as well.  In particularly we show this in the case where the subgroups are periodic or of finite rank.

Throughout the paper  we use the expression ``$(a, b, \ldots)$-bounded''  to mean  ``bounded from above by a function depending only on the parameters $a$, $b$, $\ldots$''.
\section{On groups in which $w$-values admit a finite covering}

A  covering of a group $G$ is a family $\{S_i\}_{i\in I}$ of subsets of
$G$ such that $G=\bigcup_{i\in I}\,S_i$.
If $\{H_i\}_{i\in I}$ is a covering of $G$ by subgroups, it is natural to ask
what information about $G$ can be deduced from properties of the subgroups $H_i$. In the case where the covering is finite actually quite a lot about the structure of $G$ can be said. The first result in this direction is due to Baer  \cite{Neu1}, who proved that $G$ admits a finite covering by abelian subgroups if and only if it is central-by-finite. The nontrivial part of this assertion is an immediate consequence of a subsequent result of B.H. Neumann \cite{Neu2}: if $\{S_i\}$ is a finite covering of $G$ by cosets of subgroups, then $G$ is also covered by the cosets $S_i$ corresponding to subgroups of finite index in $G$. In other words, we can get rid of the cosets of subgroups of infinite index without losing the covering property.

If the set of all $w$-values in a group $G$ can be covered by finitely many subgroups, one could hope to get some information about the structure of the verbal subgroup $w(G)$.

In this direction we mention the following result that was obtained in \cite{RS}. Let $w$ be either the lower central word $\gamma_k$ or the derived word $\delta_k$. Suppose that $G$ is a group in which  all  $w$-values are contained in a union of finitely many Chernikov subgroups,  then $w(G)$ is Chernikov.  Remind that a group is Chernikov  if and only if  it is a finite extension of a direct sum of finitely many Pr\"ufer groups $C_{p^\infty}$.

Another result of this nature was established in \cite{FS}: If $G$ is a group in which  all commutators are contained in a union of finitely many cyclic
subgroups, then $G'$ is either cyclic or finite. Later G.\ Cutulo and C.\ Nicotera showed that if $G$ is a group in which  all $\gamma_k$-values are contained in a union of finitely many cyclic subgroups, then $\gamma_k(G)$ is finite-by-cyclic. They also showed that $\gamma_k(G)$ can be neither cyclic nor finite \cite{CN}.

In our recent paper \cite{AS} we dealt with profinite groups in which all $w$-values are contained in a union of finitely many subgroups with certain prescribed properties.  A profinite group is a topological group that is isomorphic to an inverse limit of finite groups. The textbooks \cite{riza} and \cite{wilson} provide a good introduction to the theory of profinite groups. In the context of profinite groups all the usual concepts of group theory are interpreted topologically. In particular, by a verbal subgroup of a profinite group corresponding to the word $w$ we mean the closed subgroup generated by  all $w$-values.

 \begin{theorem} 
 \label{thm1}
 Let $w$ be a multilinear commutator word and $G$ a profinite group that has finitely many periodic subgroups $G_1,G_2,\dots,G_s$ whose union contains all $w$-values in $G$. Then $w(G)$ is locally finite.
\end{theorem}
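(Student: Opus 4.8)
The plan is to combine two external inputs: the well-known theorem of Zelmanov that a periodic profinite group is locally finite, and B.\,H.~Neumann's covering theorem quoted above. Write $\mathcal D$ for the set of $w$-values, so that $w(G)=\overline{\langle\mathcal D\rangle}$. Since $w$ defines a continuous map $G^n\to G$ and $G^n$ is compact, $\mathcal D$ is a closed (hence compact) subset of $G$; it is also symmetric and invariant under conjugation by $G$. Each $G_i$ is periodic, so by Zelmanov's theorem each $G_i$ is locally finite, and therefore so is each closed subgroup $L_i:=G_i\cap w(G)$. The key reduction is the following: it suffices to exhibit finitely many closed locally finite subgroups whose union is all of $w(G)$ (not merely all of $\mathcal D$). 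Indeed, granting such a cover $w(G)=\bigcup_j L_j$, Neumann's theorem lets us discard the $L_j$ of infinite index without destroying the cover, so some $L_{j_0}$ has finite index in $w(G)$; its normal core $C$ is then a locally finite open normal subgroup of $w(G)$, whence $w(G)/C$ is finite, $w(G)$ is periodic, and a second application of Zelmanov's theorem gives that $w(G)$ is locally finite.

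Thus everything reduces to upgrading the covering of the generating set $\mathcal D$ to a covering of the whole verbal subgroup $w(G)$. Here I would argue by induction on the number $s$ of subgroups. When $s=1$ there is nothing to do: $\langle\mathcal D\rangle\le G_1$, so $w(G)\le G_1$ is locally finite. For the inductive step the mechanism I would use rests on the classical observation that level sets of commutators are subgroups once the target is normal. After first reducing to the case in which the $G_i$ are normal in $G$ (exploiting that $\mathcal D$ is conjugation-invariant to replace the given subgroups by a conjugation-symmetric finite family), one fixes all but one argument of $w$ and studies the values of the remaining variable: the cocycle identities satisfied by a multilinear commutator in any single variable show that $\{x: w(x,a_2,\dots,a_n)\in G_i\}$ is a closed subgroup of $G$. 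Since $G$ is the union of these finitely many subgroups, Neumann's theorem produces one of finite index, i.e.\ an open subgroup on which the corresponding partial values are confined to a single $G_i$. Iterating this over all arguments, and using induction on the outer commutator structure of $w$, one confines the $w$-values over a suitable open subgroup to fewer of the $G_i$, and assembles from the finitely many cosets involved a covering of $w(G)$ by finitely many locally finite subgroups.

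The main obstacle is exactly this last passage, from a covering of the \emph{generating set} $\mathcal D$ to a covering of the \emph{group} $w(G)$: a priori a product of two $w$-values lying in different $G_i$ need have no control on its order, and since the $G_i$ are only assumed periodic and not of bounded exponent, no uniform quantitative bound survives in the finite quotients of $G$. The argument must therefore be structural and topological rather than counting-based, which is why I would run it inside the compact group $G$ via Neumann's theorem (or, equivalently, via Baire category, which forces some $G_i\cap\mathcal D$ to have nonempty interior in $\mathcal D$) together with Zelmanov's theorem, rather than through the finite images of $G$. A secondary delicate point is the reduction to normal covering subgroups: one cannot simply pass to normal closures, since the normal closure of a periodic subgroup may fail to be periodic, so this step too must be arranged so as to preserve local finiteness of the pieces.
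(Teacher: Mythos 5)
There is a genuine gap, and it sits exactly where you predicted the difficulty would be. Two steps of your inductive mechanism fail. First, the claim that ``the cocycle identities satisfied by a multilinear commutator in any single variable show that $\{x:w(x,a_2,\dots,a_n)\in G_i\}$ is a closed subgroup'' (once the target is normal) is false for general multilinear commutators: it is special to the simple commutator. Already for $w=[[x,a],b]$ one has $[[xy,a],b]=[[x,a]^y,b]^{[y,a]}\,[[y,a],b]$ and $[[x,a]^y,b]=[[x,a],b^{y^{-1}}]^y$, so modulo a normal target the expansion produces a value in which the \emph{fixed} entry $b$ has been replaced by the conjugate $b^{y^{-1}}$; closure under products therefore fails, and no identity of the kind you invoke is available for nested commutators. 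Second, the preliminary reduction ``to the case in which the $G_i$ are normal in $G$'' by passing to a conjugation-symmetric \emph{finite} family is impossible in general: a closed subgroup of a profinite group can have infinitely many conjugates (its normalizer need not be open), and conjugation-invariance of the set of $w$-values does not repair this. Since the entire passage from a covering of the $w$-values to a covering of $w(G)$ rests on these two steps, that passage --- which, as you rightly say, is the crux --- is not achieved; your opening reduction (Neumann plus normal core, giving that a group covered by finitely many locally finite subgroups is locally finite) is correct but is left without input.

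For comparison, the paper never constructs a covering of $w(G)$ at all. It applies Baire's Category Theorem not to a subgroup covering but to the closed covering of $G^{2^k}$ by the sets $S_i=\{(x_1,\dots,x_{2^k}):\delta_k(x_1,\dots,x_{2^k})\in G_i\}$, using that every $\delta_k$-value is a $w$-value. This yields an open normal subgroup $H$, constants $a_1,\dots,a_{2^k}$ and an index $j$ with $\delta_k(a_1h_1,\dots,a_{2^k}h_{2^k})\in G_j$ for all $h_i\in H$. Factoring out first the product of the finitely many conjugates of $K\cap H$ (where $K$ is the subgroup generated by these values; its normalizer has finite index because $H$ is open, which is how the paper sidesteps the conjugacy problem you ran into) and then, via Dicman's Lemma, the finite normal closure of $K$, one reaches a genuine coset identity $\delta_k(a_1h_1,\dots,a_{2^k}h_{2^k})=1$. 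Multilinearity of $\delta_k$ then forces $H$ to be soluble of derived length at most $k$, and the theorem is finished by induction on this derived length, using (a) that a normal subgroup containing no nontrivial $w$-values centralizes $w(G)$, (b) a profinite version of Mann's theorem to get finite exponent of $w(G)'$ from local finiteness of $w(G)/Z(w(G))$ together with solubility-by-finiteness, and (c) only in the final abelian case does the covering hypothesis convert directly into the decomposition of $w(G)$ as the product of the subgroups $w(G)\cap G_i$. The structural information you hoped to extract from single-variable level sets is extracted in the paper from the Baire-category coset identity instead; if you want to salvage your outline, that is the step to adopt.
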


A group is periodic (torsion) if every element of the group has finite order and  a group is called locally finite if each of its finitely generated subgroups is finite. Periodic profinite groups have received a good deal of attention in the past. In particular, using Wilson's reduction theorem \cite{wil}, Zelmanov has been able to prove local finiteness of periodic compact groups \cite{z}.  Earlier Herfort showed that there exist only finitely many primes dividing the orders of elements of a periodic profinite group \cite{herfort}.  It is a long-standing problem whether any periodic profinite group has finite exponent. Remind that a group $G$ has exponent $e$ if $x^e=1$ for all $x\in G$ and $e$ is the least positive integer with that property.

In \cite{AS} we also considered the situation where all $w$-values are contained in a union of finitely many subgroups of finite rank. A group $G$ is said to be of finite rank $r$ if every finitely generated subgroup of $G$ can be generated by $r$ elements. Very often the term ``special rank" is used with the same meaning. However in the present paper there is no danger of confusing different notions so we will just use the term ``rank". We have the following result.

\begin{theorem}
\label{444} 
Let $w$ be a multilinear  commutator  and $G$ a profinite group that has finitely many subgroups $G_1,G_2,\dots,G_s$ whose union contains all $w$-values in $G$. If each of the subgroups  $G_{1},G_{2},\ldots,G_{s}$ is of finite rank, then $w(G)$ has finite rank as well.
\end{theorem}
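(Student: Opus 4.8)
The plan is to reduce the statement to a quantitative bound for finite groups, isolate the clean ``local'' estimate that the covering provides, and then fight the one place where that estimate is not enough. Throughout, as is usual in the profinite setting, the $G_i$ are understood to be closed subgroups (this is what makes the statement correct, and what lets rank pass to images).

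First I would reduce to finite groups. For any profinite group $K$ one has $\mathrm{rank}(K)=\sup\{\mathrm{rank}(K/M)\}$, where $M$ runs over the open normal subgroups of $K$: the inequality $\ge$ is clear, while for $\le$ one uses that every closed subgroup $H\le K$ satisfies $d(H)=\sup_M d(H/(H\cap M))$ and that $H/(H\cap M)$ is a subgroup of $K/M$. Applying this with $K=w(G)$, whose finite continuous quotients are the groups $w(G)/(w(G)\cap N)\cong w(G/N)$ for $N$ open normal in $G$, and noting that $G_iN/N$ still has rank at most $r$ while the $w$-values of $G/N$ are the images of those of $G$, the whole problem comes down to the finite statement: there is a function $f=f(w,s,r)$ such that if $G$ is \emph{finite} and all $w$-values lie in a union of $s$ subgroups of rank at most $r$, then $\mathrm{rank}(w(G))\le f$.

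Next I would reduce the finite problem to sectional rank. It is a standard fact, resting ultimately on the classification, that the rank of a finite group is bounded above by a function of the maximum of the ranks of its Sylow subgroups; hence it suffices to bound, uniformly in the prime $p$, the rank of a Sylow $p$-subgroup of $w(G)$, equivalently the dimension of every elementary abelian $p$-section of $w(G)$. The conceptual heart is the following observation: if $V$ is an elementary abelian $p$-section of $w(G)$ that is \emph{generated by the images of $w$-values} --- for example any section of the form $w(G)/M$ sitting at the top of $w(G)$ --- then those images lie in the union of the $s$ subspaces coming from the $G_i$, each of dimension at most $r$, so $\dim V\le sr$. Using that the set of $w$-values is invariant under conjugation, all sections in sight may be taken $G$-invariant, and this already bounds every \emph{top} section by $sr$. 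For the nonabelian part I would use conciseness together with the classification: every nonabelian simple composition factor $S$ of $w(G)$ satisfies $w(S)=S$, so $S$ is generated by $w$-values lying in subgroups of rank at most $r$; this forces the Lie rank of $S$, and its multiplicity as a composition factor, to be bounded, so the nonabelian factors contribute only a bounded amount to the Sylow ranks.

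The main obstacle is to pass from a bound on the top sections to a bound on \emph{all} of them, i.e. on $d(Q)$ for every subgroup $Q\le w(G)$; a bound on $d(w(G))$ alone is far from sufficient, since there are $2$-generated $p$-groups of arbitrarily large rank. Writing $\Phi=w(G)^p[w(G),w(G)]$, the subgroup $\Phi$ is generated by the powers $u^p$ (each lying in the same $G_i$ as $u$) together with the commutators $[u,v]$ of $w$-values; the same-subgroup commutators stay inside a single $G_i$, but the \emph{cross} commutators $[u,v]$ with $u\in G_i$, $v\in G_j$ and $i\ne j$ are not governed by the hypothesis, and it is precisely these that block a naive induction on $\Phi$. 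I expect this to be the crux: one should exploit the conjugation-invariance of the full value set together with a Neumann-type covering argument to show that a suitable bounded-index normal subgroup still has its relevant $w$-values covered with the parameters $s$ and $r$ kept under control, and then invoke the structure theory of $p$-groups of bounded rank (in particular the reduction to powerful sections, for which the rank coincides with the number of generators) to upgrade the top estimate $d\le sr$ into a genuine bound on the rank of every Sylow $p$-subgroup of $w(G)$. Combined with the control of the nonabelian composition factors, this yields the desired bound on $\mathrm{rank}(w(G))$.
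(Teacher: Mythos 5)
Your proposal reduces the theorem to a strictly stronger, quantitative statement: the existence of a uniform bound $f(w,s,r)$ on $\mathrm{rank}(w(G))$ over all \emph{finite} groups $G$ satisfying the covering hypothesis. That uniformity is unavoidable in your scheme (the supremum over finite continuous quotients is useless without it), but it is precisely the hard point: the paper states explicitly that the quantitative question is answered only in the special case $w=\gamma_k$ (Theorem \ref{555}), and even there the proof requires the focal subgroup theorem (Theorem \ref{focal}, which depends on the classification of finite simple groups) together with the Lubotzky--Mann theory of powerful $p$-groups; for a general multilinear commutator the bounded-rank version is left open. Your own text concedes the decisive step: after correctly observing that the estimate $\dim V\le sr$ for top sections does not bound $d(Q)$ for arbitrary $Q\le w(G)$, and that the cross commutators $[u,v]$ with $u\in G_i$, $v\in G_j$, $i\ne j$ escape the hypothesis, you write that you ``expect'' a Neumann-type covering argument to keep the parameters under control. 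No such argument is supplied, so what you have is a reduction of the theorem to an unsolved --- and apparently harder --- problem, not a proof.

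The paper's proof goes in the opposite direction and is intrinsically profinite, which is why it never needs uniform bounds. Since the covering is finite and the $G_i$ are closed, the sets $S_i$ of tuples $(x_1,\dots,x_{2^k})$ with $\delta_k(x_1,\dots,x_{2^k})\in G_i$ are closed and cover the compact group $G\times\dots\times G$, so Baire's category theorem produces an open normal subgroup $H$ and cosets $a_1H,\dots,a_{2^k}H$ all of whose $\delta_k$-values lie in a single $G_j$; after factoring out finite-rank normal pieces exactly as in the proof of Theorem \ref{thm1}, one arrives at the coset identity $\delta_k(a_1h_1,\dots,a_{2^k}h_{2^k})=1$, which forces $H$ to be soluble of derived length at most $k$. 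One then inducts on the derived length of a normal open soluble subgroup, uses that a normal subgroup containing no nontrivial $w$-values centralizes $w(G)$ to get that $w(G)/Z(w(G))$ has finite rank, and closes the induction with the Schur-type theorem of Franciosi, de Giovanni and Kurdachenko \cite[Theorem 2.5]{kurda} --- if $K$ is soluble-by-finite and $K/Z(K)$ has finite rank, then $K'$ has finite rank --- in place of Mann's theorem from the periodic case; the abelian residue is then handled directly as a product of the $w(G)\cap G_i$. This route completely sidesteps your ``main obstacle'': compactness and Baire category do the qualitative work that your missing uniform finite-group bound would have to do, at the price of yielding no bound on the rank in terms of $w$, $s$, $r$.
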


It follows from the proof  that if under the hypotheses of Theorem  \ref{thm1} the subgroups $G_1,G_2,\dots,G_s$ have finite exponent, then $w(G)$ has finite exponent  as well. It is natural to address the question whether the exponent (rank) of $w(G)$ is bounded in terms of the exponents (ranks) of $G_1,G_2,\dots,G_s$ and $s$.

We have been able to answer the question in the affirmative only in the particular case where $w=\gamma_k$. The case of the exponent was treated using the Lie-theoretic techniques that Zelmanov created in his solution of the restricted Burnside problem \cite{ze1,ze2}.

\begin{theorem}\label{333} Let $e,k,s$ be positive integers and $G$ a profinite group that has  subgroups $G_1,G_2,\dots,G_s$ whose union contains all $\gamma_k$-values in $G$. Suppose that each of the subgroups $G_1,G_2,\dots,G_s$ has finite exponent dividing $e$. Then $\gamma_k(G)$ has finite $(e,k,s)$-bounded exponent.
\end{theorem}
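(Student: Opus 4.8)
The plan is to prove Theorem \ref{333} in two stages. First I reduce to a problem about the exponent of a finite group, and then I attack the reduced problem with Zelmanov's Lie-theoretic machinery. Since $\gamma_k(G)$ is the closed subgroup generated by all $\gamma_k$-values, and a profinite group has bounded exponent dividing $e$ if and only if every continuous finite quotient has exponent dividing $e$, it suffices to bound the exponent of $\gamma_k(Q)$ for every finite continuous quotient $Q=G/N$. The images of $G_1,\dots,G_s$ in $Q$ form $s$ subgroups of exponent dividing $e$ whose union contains all $\gamma_k$-values of $Q$. Thus the whole problem becomes a purely finite-group statement: given a finite group $Q$ with subgroups of exponent dividing $e$ covering all $\gamma_k$-values, bound the exponent of $\gamma_k(Q)$ by a function of $e,k,s$ only. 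This reduction is the routine part.

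Next I would show that every $\gamma_k$-value $g$ in $Q$ has order dividing $e$: since $g$ lies in one of the $G_i$, which has exponent dividing $e$, this is immediate. So the real content is to bound the exponent of the subgroup generated by a set of elements each of order dividing $e$, where that set is exactly the set of $\gamma_k$-values. Here I would invoke the positive solution of the restricted Burnside problem in the precise form Zelmanov's methods deliver: the exponent of $\gamma_k(Q)$ should be controlled once we control both the exponent of the generating $\gamma_k$-values and the nilpotency/solvability data forced by $w=\gamma_k$. The key is that $\gamma_k(Q)$ is generated by elements of bounded order, and one wants a bound independent of $Q$.

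The hard part will be passing from ``$\gamma_k(Q)$ is generated by elements of order dividing $e$'' to ``$\gamma_k(Q)$ has bounded exponent,'' because in general a group generated by elements of bounded order need not have bounded exponent. The trick is to exploit the covering by only $s$ subgroups, which forces strong structural constraints: a Baer--Neumann type argument should show that one of the $G_i$, or a bounded intersection of them, has bounded index in a large verbal piece, so that $\gamma_k(G)$ is close to being covered by boundedly many bounded-exponent subgroups in a way that lets one apply Zelmanov's theorem to an associated Lie algebra. Concretely, I would associate to a suitable $p$-section the Lie algebra via the lower central or Jennings--Lazard--Zassenhaus filtration, show that it satisfies a polynomial identity coming from the Engel-type conditions that bounded exponent of $\gamma_k$-values imposes, and then conclude local nilpotency of the Lie algebra with bounded class, which translates back to a bound on the exponent of the pro-$p$ part. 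Summing over the boundedly many primes (bounded by Herfort's theorem, in the profinite setting, or directly in the finite setting) then yields the $(e,k,s)$-bounded exponent of $\gamma_k(Q)$, and hence of $\gamma_k(G)$. The main obstacle throughout is ensuring every bound is uniform in $Q$, which is exactly where Zelmanov's uniform solution to the restricted Burnside problem is indispensable.
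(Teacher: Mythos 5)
Your opening reduction (pass to finite continuous quotients) and the observation that every $\gamma_k$-value has order dividing $e$ agree with the paper, which likewise notes that $G$ may be assumed finite. But there is a genuine gap at the crucial middle step, and the mechanism you propose to fill it would fail. You suggest a Baer--Neumann type argument to show that some $G_i$ (or a bounded intersection) has bounded index in ``a large verbal piece.'' Neumann's lemma applies to coverings of an entire group by cosets of subgroups; here only the \emph{set} of $\gamma_k$-values --- not a subgroup --- is covered by $G_1,\dots,G_s$, so no index conclusion about any $G_i$ follows, and indeed nothing of the kind is true or used in the paper. Consequently your route from ``$\gamma_k(Q)$ is generated by elements of order dividing $e$'' to ``bounded exponent'' has no working engine: as you yourself note, bounded order of generators does not bound the exponent, and the structural constraint you hoped to extract from the covering is not available in the form you describe.

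What the paper actually does at this point is quite different and rests on two ingredients absent from your proposal. First, the focal subgroup theorem for multilinear commutator words (Theorem \ref{focal}, which depends on the classification of finite simple groups): it shows that each Sylow $p$-subgroup $P$ of $\gamma_k(G)$ is generated by \emph{powers of $\gamma_k$-values}, hence by elements of order dividing $e$; this reduces the problem to bounding the exponent of the nilpotent group $Q=\gamma_k(P)$. Second, Lemma \ref{lemma product}: since the set of $\gamma_k$-values is commutator-closed, $Q$ decomposes as a product $Q=Q_1Q_2\ldots Q_s$ with $Q_i=G_i\cap Q$ of exponent dividing $e$, so every element of $Q$ is a product of at most $s$ elements of order dividing $e$. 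It is this bounded-width product decomposition --- not a bounded-index statement --- that Zelmanov's Lie-theoretic techniques can process to give an $(e,k,s)$-bound on the exponent of $Q$, hence of $P$. Uniformity over primes then comes for free, since the per-Sylow bound is independent of $p$ (your appeal to Herfort's theorem is unnecessary in the finite setting). Your final paragraph's Lie-algebra outline (Jennings--Lazard--Zassenhaus filtration, PI, bounded local nilpotency) is the right flavor of machinery, but without Theorem \ref{focal} and Lemma \ref{lemma product} there is no hypothesis on which to run it.
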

The situation where the subgroups $G_1,G_2,\dots,G_s$ are of finite rank does not require the use of Zelmanov's Lie-theoretic techniques. Instead an important role in the proof of the next theorem is played by the Lubotzky-Mann theory of powerful $p$-groups \cite{LM}.

\begin{theorem}\label{555} Let $k,r,s$ be positive integers and $G$ a profinite group that has  subgroups $G_1,G_2,\dots,G_s$ whose union contains all $\gamma_k$-values in $G$. Suppose that each of the subgroups $G_1,G_2,\dots,G_s$ has finite rank at most $r$. Then $\gamma_k(G)$ has finite $(k,r,s)$-bounded rank.
\end{theorem}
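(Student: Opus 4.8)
By Theorem~\ref{444} we already know that $\gamma_k(G)$ has finite rank; the whole content of the statement is to make this rank depend only on $k,r,s$. My plan is to reduce the assertion to an estimate for finite $p$-groups that is uniform in the prime $p$, and then to exploit the Lubotzky--Mann theory. Since the rank of a profinite group is the supremum of the ranks of its finite continuous quotients, and the hypotheses descend to every such quotient (the images of $G_1,\dots,G_s$ still have rank at most $r$ and their union still contains all $\gamma_k$-values), it suffices to bound $\mathrm{rk}(\gamma_k(G))$ by a $(k,r,s)$-bounded constant for $G$ finite. At this point I would invoke the Guralnick--Lucchini theorem, according to which the rank of a finite group exceeds the maximum of the ranks of its Sylow subgroups by at most one. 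Applied to $\gamma_k(G)$, it reduces everything to bounding, uniformly in $p$, the rank of a Sylow $p$-subgroup of $\gamma_k(G)$. This reduction is attractive because it dispenses with the (possibly non-soluble) global structure of $G$ and confines the analysis to $p$-groups; the insistence on uniformity in $p$ is unavoidable, since the set of primes dividing the order of a finite-rank profinite group cannot be bounded (already $\hat{\mathbb{Z}}$ has rank $1$ and involves every prime), and it is precisely in the $p$-group world that powerful groups deliver bounds free of $p$.

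The first substantive estimate bounds the number of generators. Working in a finite $p$-group $P$ whose $\gamma_k$-values lie in $\bigcup_{i=1}^s G_i$ with each $\mathrm{rk}(G_i)\le r$, pass to the elementary abelian quotient $V=\gamma_k(P)/\Phi(\gamma_k(P))$. The images of the $\gamma_k$-values span $V$, while each of them lies in the image of some $G_i\cap\gamma_k(P)$, an $\mathbb{F}_p$-subspace of dimension at most $\mathrm{rk}(G_i)\le r$. A spanning set contained in a union of $s$ subspaces of dimension at most $r$ lies in their sum, so $d(\gamma_k(P))=\dim V\le sr$, a bound free of $p$. The same device applies to every lower central factor beyond $\gamma_k$: since a left-normed commutator of weight $j\ge k$ is itself a $\gamma_k$-value (iteratively group the first two entries into a single letter), the sets of $\gamma_j$-values with $j\ge k$ are all contained in $\bigcup_i G_i$, and one obtains $d(\gamma_j(P)/\gamma_{j+1}(P))\le sr$ for every $j\ge k$.

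These per-layer bounds are not enough by themselves, and the passage from them to a bound on $\mathrm{rk}(\gamma_k(P))$ is the step I expect to be the main obstacle. The number of generators does not control the rank of a $p$-group --- $C_p\wr C_p$ is $2$-generated yet has rank of order $p$ --- and neither does boundedness of all lower central factors, as a uniserial module over a cyclic $p$-group shows. What rescues the situation is that the covering hypothesis confines not merely the generators but the commutator \emph{values} to subgroups of rank $\le r$: using B.H.~Neumann's covering theorem in commutator form, for a fixed $\gamma_{k-1}$-value $c$ the sets $\{y:[c,y]\in G_i\}$ cover $G$ and, after replacing the $G_i$ by their normal cores (equivalently, passing modulo a normal subgroup of $(k,r,s)$-bounded rank), are subgroups, so one of them, say $K_c$, has index at most $s$; then $\langle[c,K_c]\rangle\le G_i$ has rank at most $r$, and a commutator-calculus argument shows $\langle[c,y]:y\in G\rangle$ has $(r,s)$-bounded rank. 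All of this must then be kept free of $p$, and here the Lubotzky--Mann theory enters decisively: a powerful $p$-group has rank equal to its number of generators with no dependence on $p$, and a powerful characteristic subgroup has index a power of $p$ whose exponent is bounded in terms of the rank alone. The plan is therefore to reduce the relevant Sylow $p$-subgroup to a powerful one, on which the generator bound of the previous paragraph becomes a rank bound, and to absorb the bounded quotient by subadditivity of rank under extensions.

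Finally the pieces are assembled by a double induction. Neumann's reduction lowers the number $s$ of covering subgroups on a subgroup of $(k,r,s)$-bounded index, feeding an induction on $s$; and the identity expressing $\gamma_j$-values as $\gamma_k$-values, together with the finiteness of rank of $\gamma_{k-1}(G)$ from Theorem~\ref{444}, feeds an induction on $k$ that controls the accumulation of the bounded-rank subgroups $\langle[c,y]:y\in G\rangle$ as $c$ ranges over the $\gamma_{k-1}$-values. Re-assembling the Sylow-wise estimates through Guralnick--Lucchini and taking the supremum over finite quotients returns the profinite statement. The technical heart, where I expect the real difficulty to lie, is the uniform-in-$p$ conversion of the generator and value bounds into a genuine rank bound via powerful $p$-groups; the passage to normal cores without destroying the covering of the values, absorbed by the auxiliary bounded-rank normal subgroup, is the principal bookkeeping hurdle along the way.
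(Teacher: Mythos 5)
Your skeleton (reduction to finite continuous quotients, Sylow-wise analysis reassembled by Guralnick--Lucchini, Lubotzky--Mann at the end) is broadly compatible with the paper's outline, and your computation $d(\gamma_k(P))\le rs$ is essentially Lemma~\ref{lemma product} in Frattini-quotient form. But there is a genuine gap at the first substantive step. Once you have reduced to bounding the rank of a Sylow $p$-subgroup $P$ of $\gamma_k(G)$, the covering hypothesis constrains only $\gamma_k$-values, and inside $P$ only those computed from elements of $P$; these generate $\gamma_k(P)$, which is in general a proper --- possibly trivial --- subgroup of $P$. If $P$ is abelian, every bound in your second paragraph is vacuous, yet $\mathrm{rk}(P)$ is exactly what must be bounded; nothing in your argument controls $P/\gamma_k(P)$, nor even $d(P)$. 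The paper closes precisely this hole with Theorem~\ref{focal}: $P$ is generated by powers of $\gamma_k$-values of $G$, and each such power lies in the same $G_i$ as the value it comes from, so the covering reaches an actual generating set of $P$. The survey states explicitly that Theorem~\ref{focal} is used in the proof of Theorem~\ref{555}, and your proposal never invokes it or any substitute for it.

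The rescue you propose for the generators-to-rank conversion also fails as written. B.H.\ Neumann's theorem concerns coverings by cosets of \emph{subgroups}; the sets $\{y: [c,y]\in G_i\}$ are subgroups only when $G_i$ is normal, and replacing $G_i$ by its normal core --- which may well be trivial --- destroys the covering, while your parenthetical alternative ``passing modulo a normal subgroup of $(k,r,s)$-bounded rank'' presupposes a subgroup that need not exist: the normal closure of a subgroup of rank $r$ has no bounded rank in general. Finally, the step you yourself flag as the technical heart --- converting generator bounds into rank bounds uniformly in $p$ via powerful $p$-groups --- is left as a plan rather than proved, and you correctly observe ($C_p\wr C_p$) that $d$ does not control rank, so the proposal stops short exactly where the work lies. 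The paper's route instead feeds the product decomposition of Lemma~\ref{lemma product} --- $\gamma_k(P)=Q_1\cdots Q_s$ with $Q_i=G_i\cap\gamma_k(P)$ of rank at most $r$, available because the set of $\gamma_k$-values is commutator-closed --- into the Lubotzky--Mann machinery, in parallel with the sketch given for Theorem~\ref{333} where Zelmanov's techniques play the corresponding role. In sum: the two decisive inputs, the focal subgroup theorem and the product decomposition exploited via powerful $p$-groups, are respectively missing and replaced by steps that do not hold.
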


The proofs of the above theorems will be sketched out in Section 4 below.  In the next section we will discuss one specific tool used in the proofs of Theorems \ref{333} and \ref{555} --namely, a version of the Focal Subgroup Theorem.
\section{Around the focal subgroup theorem}

The Focal Subgroup Theorem \cite[Theorem 7.3.4]{Go} says that if $P$ is a Sylow subgroup of a finite group $G$, then $P\cap G'$ is generated by elements of the form $[x,y]\in P$, where $x\in P$ and $y\in G$. In particular, it follows that Sylow subgroups of $G'$ are generated by commutators. Thus, the following question arises.

Let $w$ be a commutator word, $G$ a finite group and $P$ a Sylow $p$-subgroup of $w(G)$.  Is it true that $P$  can be generated by $w$-values lying in $P$?

The above question was considered in \cite{AFS} where  the following result was proved.

\begin{theorem}
\label{focal}
 Let $w$ be a multilinear  commutator, $G$ a finite group and $P$ a Sylow $p$-subgroup of $w(G)$. Then $P$ is generated by powers of $w$-values. 
\end{theorem}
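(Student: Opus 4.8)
The plan is to fix $H=w(G)$ and a Sylow $p$-subgroup $P$ of $H$, and to reduce the statement to a single equality. Since a multilinear commutator is closed under conjugation, the set of $p$-parts of $w$-values is a normal subset of $G$; let $R$ be the subgroup of $P$ generated by those $p$-parts (equivalently, by the powers of $w$-values) that happen to lie in $P$. A routine Sylow-and-conjugation argument shows that $R$ is normal in $P$ and that the theorem is equivalent to the assertion $R=P$. I may also assume $w$ has length at least $2$, so that $H\le G'$, the case $w=\gamma_1$ being trivial.

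The engine is a transfer (focal) argument. Consider the transfer homomorphism $V\colon H\to P/P'$. For any $w$-value $g$, the standard evaluation of $V(g)$ is a product of factors of the form $t\,g^{\ell}\,t^{-1}$, each lying in $P$; each such factor is a power of the conjugate $w$-value $g^{t^{-1}}$ and therefore lies in $R$. As $w$-values generate $H$, this forces $V(H)\le RP'/P'$. On the other hand the Focal Subgroup Theorem tells us that $V$ sends $P$ onto $P/\mathrm{Foc}_H(P)$, because on $P$ the transfer is, modulo $\mathrm{Foc}_H(P)$, the map $x\mapsto x^{[H:P]}$, an automorphism of this $p$-group since $[H:P]$ is prime to $p$. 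Comparing the two facts yields
\[
P=R\cdot\mathrm{Foc}_H(P)=R\,(P\cap H').
\]
For the base case $w=\gamma_2$ this is already the whole statement: the classical Focal Subgroup Theorem, applied with $x$ ranging over a Sylow $p$-subgroup of $G$, expresses $P$ through commutators lying in $P$, i.e.\ through $\gamma_2$-values, so $P=R$.

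It remains to absorb the focal part $P\cap H'$ into $R$, and this is where I would induct on $|G|$. Reducing modulo a minimal normal subgroup $N$ and invoking the inductive hypothesis for $G/N$ gives, after lifting $p$-parts and a short argument with Sylow subgroups and the Dedekind identity, the relation $P=R\,(P\cap N)$; so in a minimal counterexample every minimal normal subgroup lies in $H$ and is not a $p'$-group. If $N$ is an elementary abelian $p$-chief factor one shows $N\le R$ from commutator identities together with $N\le w(G)$; if $N$ is a direct product of nonabelian simple groups and $N<G$, then applying the inductive hypothesis to the smaller group $N$, noting that $w(N)=N$, already forces $P\cap N\le R$.

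The hard part will be the irreducible core left by this reduction, namely the situation in which $G$ itself is (almost) a nonabelian simple group with $w(G)=G$. Here transfer gives no information, since $\mathrm{Foc}_G(P)=P$, and one genuinely needs to know that the $w$-values already fill a Sylow $p$-subgroup up to powers. For $w=\gamma_2$ this is Ore's theorem that every element is a commutator; in general it rests on the theory of finite simple groups and on surjectivity and width results for multilinear commutator word maps, which guarantee that the $p$-parts of $w$-values generate a Sylow $p$-subgroup of $G$. Combining this with the focal reduction and the chief-factor induction closes the gap from $P=R\,(P\cap H')$ down to $P=R$, which is the assertion of the theorem.
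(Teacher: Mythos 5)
A preliminary remark: this survey does not actually prove Theorem \ref{focal}; it quotes it from \cite{AFS} and records only that the proof depends on the classification of finite simple groups, so your proposal has to be judged on its own. Most of your architecture is sound. The transfer computation is correct: each factor $tg^{\ell}t^{-1}=(g^{t^{-1}})^{\ell}$ in the evaluation of $V(g)$ is a power of a conjugate $w$-value lying in $P$, so $V(H)\le RP'/P'$, and with the focal subgroup theorem this gives $P=R\,(P\cap H')$. Your reduction $P=R\,(P\cap N)$ modulo a minimal normal subgroup $N$ also goes through (lift $p$-parts, conjugate into $P$ inside $P(H\cap N)$ using that $P$ is a Sylow subgroup there, and finish with the Dedekind identity and the fact that in a $p$-group the normal closure of a proper subgroup is proper); the case of nonabelian $N<G$ is correctly handled by induction applied to the group $N$ with $w(N)=N$; and in the simple core the CFSG-dependent input you invoke is genuinely available: by the solution of the Ore conjecture and induction on the structure of $w$, every element of a nonabelian finite simple group is a $w$-value, so that case is immediate.

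The genuine gap is the elementary abelian $p$-chief factor case, which you dismiss with ``one shows $N\le R$ from commutator identities together with $N\le w(G)$.'' That is precisely the crux, and no commutator identity does it: an element of $N\le w(G)$ need not be anywhere near a single $w$-value (indeed $N$ may contain no nontrivial $w$-value at all). What closes this case is a dichotomy plus a second, different use of transfer, neither of which appears in your sketch. Note first that $N$, being a normal $p$-subgroup of $H=w(G)$, lies in $P$, so your reduction gives $P=RN$. Since conjugates of $w$-values are $w$-values, the $w$-values lying in $N$ generate a normal subgroup of $G$; if some such value is nontrivial, minimality of $N$ forces $N=\langle w\mbox{-values in } N\rangle\le R$, whence $P=R$, a contradiction. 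Otherwise $N$ contains no nontrivial $w$-value, and the lemma quoted in this very survey (a normal subgroup containing no $w$-values centralizes $w(G)$, from \cite{Shu}) yields $N\le Z(H)$. Now evaluate the transfer on a central element: for $n\in N$ every factor $tn^{\ell}t^{-1}$ equals $n^{\ell}$, so $V(n)=n^{[H:P]}P'$; since $P=RN$ with $N$ central gives $P'=R'\le R$, the containment $V(H)\le RP'/P'$ yields $n^{[H:P]}\in R$, and as $[H:P]$ is prime to $p$ this puts $n\in R$, again a contradiction. Observe, finally, that as written your displayed relation $P=R\,(P\cap H')$ is never consumed by your induction; the transfer earns its keep through this central-element evaluation rather than through the focal factorization, so you should either use it there or drop it.
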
 

The theorem depends on the classification of finite simple groups.  It is used in the proof of Theorems \ref{333} and \ref{555}.

Another obvious corollary of the Focal Subgroup Theorem is that if $G$ is a finite group in which every nilpotent subgroup generated by commutators has exponent dividing $e$, then the exponent of $G'$ divides $e$ as well. It is easy to deduce from Theorem \ref{focal} that if every nilpotent subgroup generated by $\gamma_k$-commutators has exponent dividing $e$, then $\gamma_k(G)$ has $(k,e)$-bounded exponent. The latest result in this direction is the following theorem \cite{le}.
\begin{theorem}\label{aust} Let $e$ be a positive integer and $w$ a multilinear commutator. Suppose that $G$ is a finite group in which any nilpotent subgroup generated by $w$-values has exponent dividing $e$. Then the exponent of the corresponding verbal subgroup $w(G)$ is bounded in terms of $e$ and $w$ only.
\end{theorem}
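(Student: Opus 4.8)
The plan is to reduce the problem to bounding the exponent of a single Sylow subgroup of $w(G)$. First I would record that every $w$-value has order dividing $e$: a single $w$-value generates a cyclic, hence nilpotent, subgroup, so the hypothesis forces its exponent to divide $e$. Consequently, if $p$ is a prime not dividing $e$, then no nontrivial power of a $w$-value is a $p$-element, and Theorem~\ref{focal} (which exhibits a Sylow $p$-subgroup of $w(G)$ as a group generated by powers of $w$-values) shows that such a Sylow subgroup is trivial. Hence only the primes dividing $e$ occur in $w(G)$, and there are at most $\log_2 e$ of them. Since the exponent of the finite group $w(G)$ is the product over these primes of the exponents of its Sylow subgroups, it suffices to bound $\exp(P)$ for a Sylow $p$-subgroup $P$ of $w(G)$, with $p\mid e$, in terms of $e$ and $w$.

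To see where the difficulty lies, consider first the model case $w=\gamma_2$, so that $w(G)=G'$. Here the classical Focal Subgroup Theorem \cite{Go} presents $P=S\cap G'$, for $S$ a Sylow $p$-subgroup of $G$, as the subgroup generated by those commutators $[x,y]$ with $x\in S$, $y\in G$, that happen to lie in $P$. These are genuine $w$-values, so $P$ is a nilpotent subgroup generated by $w$-values and the hypothesis applies verbatim, giving that $\exp(P)$ divides $e$. The same mechanism underlies the remark, recorded before the statement, that for the lower central words $\gamma_k$ the conclusion follows easily from Theorem~\ref{focal}.

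For a general multilinear commutator this direct route is blocked. Theorem~\ref{focal} only guarantees that $P$ is generated by \emph{powers} of $w$-values, and a power of a $w$-value need not be a $w$-value; equivalently, the $p$-part $g_p$ of a $w$-value $g$ of mixed order generates a cyclic $p$-subgroup but is not itself a $w$-value. Thus the hypothesis cannot be applied to $P$ as it stands. The main step, and the main obstacle, is therefore to bridge the gap between powers of $w$-values and genuine $w$-values: one must show that, up to bounded exponent, $P$ is already generated by the $w$-values of $p$-power order that it contains, so that the hypothesis, which bounds by $e$ the exponent of every nilpotent subgroup generated by $w$-values and in particular that of every $p$-subgroup so generated, can be brought to bear. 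What has to be excluded is that powers of $w$-values conspire to build a section of large exponent, a dihedral-type configuration $\langle g_p,h_p\rangle$ being the prototype, while no genuine $w$-values witness it; ruling this out should use the fusion-theoretic information behind Theorem~\ref{focal} together with the global constraint that the $p$-element $w$-values generate only subgroups of exponent dividing the $p$-part of $e$. I would carry this out by passing to a section in which the $p'$-parts of the relevant $w$-values are trivialised, so that powers of $w$-values of mixed order are replaced by honest $p$-elements controlled by $p$-element $w$-values, and then invoke the hypothesis to finish. Once $P$ is shown to be controlled by genuine $w$-values in this way, the desired $(e,w)$-bounded estimate for $\exp(P)$ follows, and combining the boundedly many primes yields the $(e,w)$-bounded exponent of $w(G)$.
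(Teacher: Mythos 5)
Your opening reductions are fine and in fact reproduce what the paper itself records as the easy consequences of Theorem~\ref{focal}: a single $w$-value generates a nilpotent subgroup generated by $w$-values, so every $w$-value has order dividing $e$; hence only primes dividing $e$ occur in $w(G)$, and for $w=\gamma_2$ the classical Focal Subgroup Theorem closes the argument immediately. But from that point on your proposal does not contain a proof: the sentence ``one must show that, up to bounded exponent, $P$ is already generated by the $w$-values of $p$-power order that it contains'' \emph{is} the theorem, and the device you offer for it --- ``passing to a section in which the $p'$-parts of the relevant $w$-values are trivialised'' --- is not available. The $p'$-parts of the various $w$-values need not lie in any normal $p'$-subgroup (think of $w(G)$ a nonabelian simple group, where $w$-values of mixed order abound), so there is no section of $G$ that kills them; and even if you could arrange for the relevant $w$-values to be $p$-elements, the subgroup they generate need not be nilpotent, so the hypothesis (which constrains only \emph{nilpotent} subgroups generated by $w$-values) still does not apply to $P$. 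Your own dihedral remark shows why the residual problem is genuinely hard: a finite $p$-group generated by elements of order dividing $e$ can have arbitrarily large exponent (dihedral $2$-groups are generated by two involutions), so knowing that the generators of $P$ are powers of $w$-values of order dividing $e$ bounds nothing by itself.

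This missing step is exactly where the actual proof, as the paper describes it, brings in heavy machinery that your outline never approaches: Zelmanov's solution of the restricted Burnside problem (as in the proof of Theorem~\ref{333}, where Lie-theoretic techniques are what convert ``generated by elements of bounded order'' into ``bounded exponent''), together with a classification-dependent, Hall--Higman-style reduction from the general case to soluble groups, where one additionally proves that the Fitting height of a soluble counterexample is $(e,w)$-bounded. Fusion-theoretic information of the kind you invoke is already fully spent in Theorem~\ref{focal} itself; it cannot substitute for the Lie-theoretic and soluble-reduction arguments. So the proposal correctly maps the terrain up to the first obstacle, but the core of the theorem is asserted rather than proved.
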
 
 The proof of this result uses a number of deep tools.  In particular, it uses the classification of finite simple groups and Zelmanov's solution of the restricted Burnside problem.  It is interesting that the reduction from the general case to the case where $G$ is soluble is somewhat reminiscent of the Hall-Higman's reduction \cite{HH} for the restricted Burnside problem.  In the same time in the case of Theorem \ref{aust} the reduction uses the solution of the restricted Burnside problem.

As a by-product of the proof of Theorem \ref{aust}, it is shown that if $G$ is a finite soluble group in which any nilpotent subgroup generated by $w$-values has exponent dividing $e$, then the Fitting height of $G$ is bounded in terms of $e$ and $w$ only.
\section{About proofs}

In the present section we will describe ideas behind the proof of  Theorem  \ref{thm1}.  Details can be found in \cite{AS}.\ As the reader will see the arguments are pretty universal and can be used to obtain other results of this kind. In particular, in the next section we use the same scheme of reasoning to deduce new results on covering of $w$-values in profinite groups.

\begin{proof}[Idea of the proof of Theorem \ref{thm1}] Without explicit references we use Zelmanov's theorem that a periodic profinite group is locally finite \cite{z}. It is a general property of multilinear commutators that for every $w$ there exists $k$ such that every $\delta_k$-commutator is also a $w$-value \cite[Lemma 4.1]{Shu}. For each integer $i=1,\dots,s$ we set 
\begin{equation*}
S_i=\Big\{(x_1,\dots,x_{2^k})\in \underbrace{G\times\dots\times G}_{2^k}\mid \delta_k(x_1,\dots,x_{2^k})\in G_i\Big\}.
\end{equation*} 
The sets $S_i$ are closed in $\underbrace{G\times\dots\times G}_{2^{k}}$ and cover  the group $\underbrace{G\times\dots\times G}_{2^{k}}$.  By Baire's Category Theorem \cite[p.\ 200]{Kell} at least one of these sets contains a non-empty interior.  Hence, there exist an open subgroup $H$ of $G$, elements $a_1,\dots,a_{2^k}$ in $G$ and an integer $j$ such that 
\begin{equation*}
\delta_k(a_1h_1,\dots,a_{2^k}h_{2^k})\in G_j  \text{ for all } h_1,\dots,h_{2^k}\in H.
\end{equation*} 
Thus, all commutators of the form $\delta_k(a_1h_1,\dots,a_{2^k}h_{2^k})$ belong to the given periodic subgroup $G_j$.

Without loss of generality we can assume that the subgroup $H$ is normal. In this case $H$ normalizes the set of all commutators of the form $\delta_k(a_1h_1,\dots,a_{2^k}h_{2^k})$, where $h_1,\dots,h_{2^k}\in H$.  Let $K$ be the subgroup of $G$ generated by all commutators of the form $\delta_k(a_1h_1,\dots,a_{2^k}h_{2^k})$, where $h_1,\dots,h_{2^k}\in H$. Note that  $K\leq G_{j}$. Since the subgroup $G_{j}$ is  locally finite, so is $K$. Let $D=K\cap H$. Then $D$ is a normal locally finite subgroup of $H$ and the normalizer  of $D$ in $G$ has finite index. Therefore there are only finitely many conjugates of $D$ in $G$.  Let $D=D_1,D_2, \dots,D_r$ be all these conjugates. All of them are normal in $H$ and so their product $D_{1}D_{2}\ldots D_{r}$ is locally finite.  By passing to the quotient $G/D_1D_2\ldots D_r$  we may assume that $D=1$.  Since $D=K\cap H$ and $H$ has finite index in $G$, it follows that $K$ is finite.  On the other hand,  the normalizer of $K$ has finite index in $G$ and so, by Dicman's Lemma \cite[14.5.7]{Rob} the normal closure, say $L$, of $K$ in $G$ is also finite.  We can pass to the quotient group $G/L$ and assume that $K=1$. In that case we have $\delta_k(a_1h_1,\dots,a_{2^k}h_{2^k})=1$ for all $h_1,\dots,h_{2^k}\in H$.

This is a so-called {\it coset identity}, i.e., the cosets $a_{1}H,\ldots,a_{2^{k}}H$ satisfy the law $\delta_{k}\equiv1$. Note that coset identities often  play important role in the results on profinite groups. For example, the problem whether every compact periodic group has finite exponent can be reduced to the following question: Let $n$ be a positive integer. Suppose that a periodic profinite group $G$ has an element $a$ and an open subgroup $H$ such that $(ah)^n=1$ for every $h\in H$. Does it follow that $H$ has finite exponent? So far the positive answer to the above question is known only in the case where $n$ is prime \cite{Khukhro}.

Our case is much easier, due to the fact that the word $\delta_k$ is multilinear. We prove that the condition $\delta_k(a_1h_1,\dots,a_{2^k}h_{2^k})=1$ for all $h_1,\dots,h_{2^k}\in H$ implies that $H$ is soluble with derived length at most $k$. Thus, the group $G$ is soluble-by-finite and we can use induction on the minimal derived length $d$ of a normal open subgroup of $G$. 

So assume that $H$ has derived length $d$ and let $N=H^{(d-1)}$.  Let $M$ be the subgroup of $N$ generated by all $w$-values contained in $N$.  Then $M=M_1M_{2}\ldots M_s$, where $M_i=M\cap G_i$ for $i=1,\ldots,s$.  It follows that $M$ is locally finite and  has finite exponent. Thus  we can pass to the quotient $G/M$ and  assume that there are no nontrivial $w$-values contained in $N$.  
 
It is a property of multilinear commutators that a normal subgroup containing no $w$-values must centralize $w(G)$. Thus, $[w(G),N]=1$.
Recall that we are using induction on $d$ and $N=H^{(d-1)}$. Since $[w(G),N]=1$, we conclude that $w(G)/Z(w(G))$ is locally finite. For abstract groups we have a locally finite version of Schur's Theorem: if $K/Z(K)$ is locally finite, then $K'$ is locally finite. However it is unclear whether  this  can be used on profinite groups.  Fortunately, we have the additional information that $w(G)$ is soluble-by-finite.

Since $w(G)/Z(w(G))$ is locally finite and soluble-by-finite, it easily follows that $w(G)/Z(w(G))$ has finite exponent. By a profinite version of Mann's theorem \cite{mann} we conclude that the derived group of $w(G)$ has finite exponent. Therefore we can pass to the quotient $G/w(G)'$ and assume that $w(G)$ is abelian.  But now it is clear that $w(G)$ is the product of subgroups $w(G)\cap G_i$ for $i=1,\ldots,s$, and so $w(G)$ is locally finite. 
\end{proof} 

Theorem \ref{444} can be proved following, by and large, similar arguments. One essential difference is that in place of Mann's theorem we apply a result, due to S. Franciosi, F. de Giovanni, and L.A. Kurdachenko \cite[Theorem 2.5]{kurda}  which says that if $K$ is a soluble-by-finite group such that $K/Z(K)$ has finite rank, then $K'$ has finite rank bounded in terms of the derived length of the soluble radical of $K$, its index in $K$ and the rank of $K/Z(K)$. 

In the proofs of Theorems \ref{333} and \ref{555} techniques of quite a different nature are required. Let us focus our attention on Theorem \ref{333}.
\medskip

{\it Let $e,k,s$ be positive integers and $G$ a profinite group that has  subgroups $G_1,G_2,\dots,G_s$ whose union contains all $\gamma_k$-values in $G$. Suppose that each of the subgroups $G_1,G_2,\dots,G_s$ has finite exponent dividing $e$. Then $\gamma_k(G)$ has finite $(e,k,s)$-bounded exponent.}
\medskip

Note that without loss of generality  $G$ can be assumed finite. The following elementary lemma plays a crucial role in the proof.
\begin{lemma}
\label{lemma product}
 Let $G$ be a nilpotent group and suppose that  $G$  is  generated by a commutator-closed subset $X$ which is contained in a union of finitely many subgroups $G_1,G_2,\dots,G_s$. Then $G$ can be written as the product  \break$G=G_1G_2\ldots G_s$.
\end{lemma}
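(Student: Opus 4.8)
The plan is to argue by induction on the nilpotency class $c$ of $G$. Throughout I would write $Z=\gamma_c(G)$ for the last nontrivial term of the lower central series, so that $Z\le Z(G)$ and $G/Z$ has class at most $c-1$. The base case is when $G$ is abelian: then $G_1G_2\cdots G_s$ is already a subgroup, it contains each $G_i$ and hence contains $X\subseteq\bigcup_iG_i$, so $G=\langle X\rangle\le G_1G_2\cdots G_s\le G$ forces equality.

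For the inductive step I would first pass to the quotient $\bar G=G/Z$. Its generating set $\bar X$ is again commutator-closed (images of commutators are commutators) and is covered by the subgroups $\bar G_i=G_iZ/Z$, while $\bar G$ has smaller class. The inductive hypothesis then yields $\bar G=\bar G_1\bar G_2\cdots\bar G_s$, which lifts back to
\[
G=G_1G_2\cdots G_s\,Z.
\]
So everything reduces to absorbing the central subgroup $Z$ into the product.

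The key observation is that $Z=\gamma_c(G)$, where $G=\langle X\rangle$, is generated by the left-normed commutators $[x_1,\ldots,x_c]$ with $x_1,\ldots,x_c\in X$; this is the standard fact that $\gamma_c(G)$ is generated modulo $\gamma_{c+1}(G)=1$ by weight-$c$ commutators in the generators. Here the hypothesis that $X$ is commutator-closed is essential: iterating it gives $[x_1,x_2]\in X$, then $[x_1,x_2,x_3]\in X$, and so on, so that each such generator of $Z$ lies in $X\subseteq\bigcup_iG_i$, hence in some intersection $G_i\cap Z$. As $Z$ is abelian, this gives
\[
Z=(G_1\cap Z)(G_2\cap Z)\cdots(G_s\cap Z).
\]

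Finally I would recombine. Given $g\in G$, write $g=g_1g_2\cdots g_s z$ with $g_i\in G_i$ and $z\in Z$, and then $z=z_1z_2\cdots z_s$ with $z_i\in G_i\cap Z$. Since every $z_i$ is central, it may be transported next to the corresponding $g_i$, giving $g=(g_1z_1)(g_2z_2)\cdots(g_sz_s)$ with $g_iz_i\in G_i$; thus $g\in G_1G_2\cdots G_s$, as required. I expect the only genuinely delicate point to be the bookkeeping that identifies the generators of the central term $\gamma_c(G)$ as actual elements of $X$, so that they fall inside the given covering; this is precisely where commutator-closedness is used. The recombination step is then immediate, because centrality lets the factors of $z$ be distributed among the $g_i$ without disturbing the product.
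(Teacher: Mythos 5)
Your proof is correct and follows essentially the same route as the paper's: induction on the nilpotency class, using commutator-closedness of $X$ to see that the last term $\gamma_c(G)$ of the lower central series is generated by elements of $X$, writing it as the product of the intersections $G_i\cap\gamma_c(G)$, and absorbing these central factors into the product $G_1G_2\cdots G_s$ obtained from the quotient. You merely make explicit two details the paper leaves implicit (that left-normed commutators in elements of $X$ generate $\gamma_c(G)$, and the bookkeeping in the final recombination), which is harmless.
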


\begin{proof}
 Let $K$ be the last nontrivial term of the lower central series of $G$.  Then $K$ is generated by elements of $X$ and so $K=K_1K_2\ldots K_s$, where $K_{i}=K\cap G_{i}$ for $i=1,\ldots,s$.  Arguing by induction on the nilpotency class of $G$ assume $$G=G_1G_2\ldots G_sK_1K_2\ldots K_s.$$  Since all $K_i$ are central, we can move them  to the left in the above product and the lemma follows.
 \end{proof}

 \begin{proof}[Idea of the proof of Theorem \ref{333}] Let $P$ be  a Sylow $p$-subgroup of $\gamma_k(G)$. It is sufficient to show that the exponent of $P$ is $(e,k,s)$-bounded. By Theorem \ref{focal} we know that $P$ is generated by powers of $\gamma_k$-values -- elements of order dividing $e$. Therefore it is sufficient to show that the exponent of $\gamma_k(P)$ is $(e,k,s)$-bounded. Let $Q=\gamma_k(P)$. Then, by Lemma \ref{lemma product},  $Q$ is the product $Q_1Q_2\ldots Q_s$, where $Q_i=G_i\cap Q$ for $i=1,\ldots,s$. Using Zelmanov's Lie-theoretic techniques one can show that $Q$ has $(e,k,s)$-bounded exponent. Hence $P$ has $(e,k,s)$-bounded exponent.  This holds for every Sylow subgroup of $\gamma_k(G)$ and so  the result follows.
 \end{proof}
\section{On countable coverings of $w$-values in profinite groups}

The last section of the paper deals with the situation where $w$-values are covered by countably many subgroups with prescribed properties. 

\begin{theorem}\label{cm1} Let  $w$ be a multilinear  commutator word and $G$  a profinite group having countably many soluble subgroups whose union contains all $w$-values. Then $w(G)$ is soluble-by-finite. 
\end{theorem}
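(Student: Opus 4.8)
The plan is to run the same scheme as in the proof of Theorem \ref{thm1}, replacing the class of periodic (locally finite) subgroups by the class of soluble subgroups. The one genuinely new point is that passing from a finite to a countable cover still permits a Baire category argument, precisely because a profinite group is compact.

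First I would invoke the reduction \cite[Lemma 4.1]{Shu} to fix $k$ so that every $\delta_k$-value is a $w$-value. Replacing each $G_i$ by its closure (the closure of a soluble subgroup of derived length $d$ is again soluble of derived length at most $d$), I may assume the $G_i$ are closed soluble subgroups whose union contains all $\delta_k$-values. Setting $S_i=\{(x_1,\dots,x_{2^k}) : \delta_k(x_1,\dots,x_{2^k})\in G_i\}$, continuity of the word map together with closedness of the $G_i$ makes each $S_i$ closed, and the reduction above makes the countably many $S_i$ cover $G^{2^k}$. Since $G$ is profinite, $G^{2^k}$ is compact Hausdorff and hence a Baire space, so by Baire's Category Theorem \cite[p.\ 200]{Kell} some $S_j$ has non-empty interior even though the cover is only countable. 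This yields an open normal subgroup $H$ (pass to the normal core) and elements $a_1,\dots,a_{2^k}$ with $\delta_k(a_1h_1,\dots,a_{2^k}h_{2^k})\in G_j$ for all $h_1,\dots,h_{2^k}\in H$.

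Next I would trivialize this coset identity exactly as in Theorem \ref{thm1}, but tracking solubility in place of local finiteness. Let $K$ be the closed subgroup generated by the commutators $\delta_k(a_1h_1,\dots,a_{2^k}h_{2^k})$; then $K\le G_j$ is soluble and, as $H$ is normal, it is $H$-invariant. Put $D=K\cap H$, a soluble subgroup normal in $H$; its normalizer contains $H$ and so has finite index, whence $D$ has only finitely many conjugates, all of them normal in $H$. A product of finitely many normal soluble subgroups is soluble, so the closed product of these conjugates is a soluble normal subgroup of $G$, and factoring it out---harmless for a soluble-by-finite conclusion---lets me assume $K\cap H=1$. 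As $[G:H]$ is finite this forces $K$ finite, and Dicman's Lemma \cite[14.5.7]{Rob} then makes the normal closure of $K$ finite; factoring it out I reach the genuine coset identity $\delta_k(a_1h_1,\dots,a_{2^k}h_{2^k})=1$ for all $h_i\in H$.

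At this point the multilinearity argument already used in Theorem \ref{thm1} shows that $H$ is soluble of derived length at most $k$. Since $H$ is open, $G$ is soluble-by-finite, and restoring the soluble and finite normal subgroups factored out above (the class of soluble-by-finite groups being closed under such extensions) shows that the original $G$ is soluble-by-finite; as this property passes to subgroups, $w(G)$ is soluble-by-finite. I expect the main obstacle to be precisely the Baire step: it is compactness of the profinite group that guarantees $G^{2^k}$ is a Baire space, and hence that a countable closed cover must have a member with non-empty interior, which is what finiteness of the cover supplied in Theorem \ref{thm1}. The remaining care is bookkeeping---checking that every quotient taken is by a soluble-by-finite normal subgroup so that the final conclusion lifts, which rests on solubility being preserved under closures and under products of finitely many normal subgroups.
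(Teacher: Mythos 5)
Your proposal is correct and follows essentially the same route as the paper's own proof: the reduction to $\delta_k$ via \cite[Lemma 4.1]{Shu}, the Baire category argument (valid for countable closed covers since $G^{2^k}$ is compact Hausdorff), the trivialization of the coset identity through $K$, $D=K\cap H$, the finitely many conjugates and Dicman's Lemma, and finally solubility of $H$ of derived length at most $k$ from the multilinear coset law. Your extra care in replacing each $G_i$ by its closure (so the $S_i$ are genuinely closed) and in checking that the factored-out normal subgroups preserve the soluble-by-finite conclusion is bookkeeping the paper leaves implicit, but it is the same argument.
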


\begin{proof}
Let $G_{1},G_{2}, \ldots$ be the countably many soluble subgroups  whose union contains all $w$-values. 
Since $w$ is a multilinear commutator there exists an integer $k$ such that every $\delta_k$-commutator is also a $w$-value \cite[Lemma 4.1]{Shu}. For each integer $i\geq1$ we set 
\begin{equation*}
S_i=\Big\{(x_1,\dots,x_{2^k})\in \underbrace{G\times\dots\times G}_{2^k}\mid \delta_k(x_1,\dots,x_{2^k})\in G_i\Big\}.
\end{equation*} 
The sets $S_i$ are closed in $\underbrace{G\times\dots\times G}_{2^{k}}$ and cover  the group $\underbrace{G\times\dots\times G}_{2^{k}}$.  By Baire's Category Theorem  at least one of these sets contains a non-empty interior.  Hence, there exist an open subgroup $H$ of $G$, elements $a_1,\dots,a_{2^k}$ in $G$ and an integer $j$ such that 
\begin{equation*}
\delta_k(a_1h_1,\dots,a_{2^k}h_{2^k})\in G_j  \text{ for all } h_1,\dots,h_{2^k}\in H.
\end{equation*} 

Without loss of generality we can assume that the subgroup $H$ is normal. In this case $H$ normalizes the set of all commutators of the form $\delta_k(a_1h_1,\dots,a_{2^k}h_{2^k})$, where $h_1,\dots,h_{2^k}\in H$.  Let $K$ be the subgroup of $G$ generated by all commutators of the form $\delta_k(a_1h_1,\dots,a_{2^k}h_{2^k})$, where $h_1,\dots,h_{2^k}\in H$.  Note that  $K\leq G_{j}$. Since the subgroup $G_{j}$ is  soluble, so is  $K$.  Let $D=K\cap H$. Then $D$ is a normal soluble subgroup of $H$ and the normalizer  of $D$ in $G$ has finite index. Therefore there are only finitely many conjugates of $D$ in $G$.  Let $D=D_1,D_2, \dots,D_r$ be all these conjugates. All of them are normal in $H$ and so their product $D_{1}D_{2}\ldots D_{r}$ is soluble.  By passing to the quotient $G/D_1D_2\ldots D_r$  we may assume that $D=1$.  Since $D=K\cap H$ and $H$ has finite index in $G$, it follows that $K$ is finite. 

On the other hand,  the normalizer of $K$ has finite index in $G$ and so the normal closure, say $L$, of $K$ in $G$ is  finite.  We can pass to the quotient group $G/L$ and assume that $K=1$. In that case we have $\delta_k(a_1h_1,\dots,a_{2^k}h_{2^k})=1$ for all $h_1,\dots,h_{2^k}\in H$. It follows that the  group $H$  is soluble-by-finite-by-soluble, so in particular it is soluble-by-finite.  Since $H$ has finite index in $G$ we conclude that $G$ is soluble-by-finite, as desired.  
\end{proof}

Our attempts to treat the groups in which $w$ values are contained in a union of countably many subgroups that are periodic or of finite rank so far were successful only in the case where $w=[x,y]$.

\begin{theorem}\label{cm2} Let $G$ be a profinite group having countably many periodic subgroups whose union contains all commutators $[x,y]$ of $G$. Then $G'$ is locally finite.
\end{theorem}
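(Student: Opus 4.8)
The plan is to follow the reduction scheme of the proof of Theorem \ref{cm1} verbatim up to the coset identity, and then to exploit the special features of the word $w=[x,y]$ to upgrade ``soluble-by-finite'' to ``locally finite''. Since $[x,y]=\delta_1$, the integer $k$ furnished by \cite[Lemma 4.1]{Shu} equals $1$, so I set $S_i=\{(x,y)\in G\times G : [x,y]\in G_i\}$, apply Baire's Category Theorem to obtain an open normal subgroup $H$, elements $a,b\in G$ and an index $j$ with $[ah_1,bh_2]\in G_j$ for all $h_1,h_2\in H$, and then carry out exactly the same normalizer and Dicman reductions as before. The only place where periodicity (rather than solubility) enters is that the subgroup $K=\langle[ah_1,bh_2]\rangle$ lies in the periodic subgroup $G_j$ and is therefore locally finite by Zelmanov's theorem \cite{z}; since all the kernels factored out along the way (the product $D_1\cdots D_r$ and the normal closure $L$) are locally finite, local finiteness of $G'$ is inherited by $G$ from the final quotient. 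In this way I reduce to the coset identity $[ah_1,bh_2]=1$ for all $h_1,h_2\in H$.

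Next I read off the structure of $G$. A direct commutator computation shows that this coset identity forces $H$ to be abelian: from $[ah_1,bh_2]=1$ one deduces that every element of $H$ centralizes the coset $bH$, and then the identity $[h,bh_2]=[h,h_2][h,b]^{h_2}$ collapses to $[h,h_2]=1$. Hence $H$ is an abelian open normal subgroup and $G$ is abelian-by-finite. From this point on the hypothesis is used only in the weak form that every commutator of $G$ has finite order, a property obviously inherited by the quotients taken above.

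The heart of the argument, and the step that replaces the finite-covering tools (finite products $M_1\cdots M_s$, Mann's theorem) unavailable for countable coverings, is the following. For each fixed $g\in G$ the map $h\mapsto[g,h]$ is a continuous homomorphism $H\to H$, because $H$ is abelian and normal; its image $[g,H]$ therefore coincides with the set $\{[g,h]:h\in H\}$ of commutators, all of which have finite order, so $[g,H]$ is a periodic closed subgroup. Moreover $[g,h]$ depends only on the coset $gH$, since for $h'\in H$ one has $[th',h]=[t,h]^{h'}[h',h]=[t,h]$. Choosing coset representatives $t_1,\dots,t_m$ of the finite group $G/H$, I conclude that $[G,H]$ equals the product $[t_1,H]\cdots[t_m,H]$ of finitely many periodic subgroups of the abelian group $H$, and a finite product of torsion subgroups in an abelian group is again torsion. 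Thus $[G,H]$ is periodic, hence locally finite.

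It remains to pass from $[G,H]$ to $G'$. Modulo $[G,H]$ the image of $H$ is central and of finite index, so the centre of $G/[G,H]$ has finite index and Schur's theorem yields that $G'/[G,H]$ is finite. Consequently $G'$ is an extension of the locally finite group $[G,H]$ by a finite group, hence locally finite, as asserted. I expect the main obstacle to be precisely the point circumvented in the third paragraph: for a countable covering one cannot control the closed subgroup generated by the torsion commutators directly, because a closed subgroup topologically generated by elements of finite order need not be periodic; it is the bilinearity of the commutator on the abelian subgroup $H$, together with the finiteness of $G/H$, that reduces the situation to a finite product and rescues periodicity. This is also the reason the method is confined to $w=[x,y]$: only there does the coset identity deliver an abelian $H$ on which the relevant word map is a homomorphism.
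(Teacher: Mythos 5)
Your proposal is correct and follows essentially the same route as the paper's proof: the same Baire--Dicman reduction to the coset identity $[ah_1,bh_2]=1$, the same deduction that $H$ is abelian, the same decomposition of $[G,H]$ as a finite product of subgroups $[t_i,H]$ consisting entirely of commutators (using that $h\mapsto[t_i,h]$ is a homomorphism on the abelian $H$), and the same Schur finish. The only deviation is at one step, where the paper applies Baire's Category Theorem a second time to each $[H,a_i]$ and invokes Zelmanov's theorem to obtain local finiteness, whereas you simply observe that these subgroups are torsion subgroups of the abelian group $H$ and hence locally finite --- a mild and valid simplification.
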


\begin{theorem}\label{cm3} Let $G$ be a profinite group having countably many subgroups of finite rank whose union contains all commutators $[x,y]$ of $G$. Then $G'$ is of finite rank.
\end{theorem}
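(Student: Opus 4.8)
The plan is to follow the scheme used in the proof of Theorem~\ref{cm1}, pushing it one step further to control rank rather than merely solubility. Since $w=[x,y]=\delta_1$, Baire's Category Theorem applies directly to the closed sets $S_i=\{(x_1,x_2)\in G\times G\mid [x_1,x_2]\in G_i\}$ and yields an open normal subgroup $H$ of $G$, elements $a,b\in G$, and an index $j$ with $[ah_1,bh_2]\in G_j$ for all $h_1,h_2\in H$. Exactly as in Theorem~\ref{cm1}, setting $K=\langle [ah_1,bh_2]\mid h_1,h_2\in H\rangle\le G_j$, the subgroup $D=K\cap H$ is normal in $H$ of finite rank, has finitely many conjugates $D_1,\dots,D_r$ in $G$, and their product is a normal subgroup of finite rank. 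Passing to the quotient by this product (which, having finite rank, preserves the hypothesis, since the images of the $G_i$ still form a countable covering of the commutators by finite-rank subgroups, and changes the rank of $G'$ only boundedly) we may assume $K$ is finite; passing further to the quotient by the finite normal closure of $K$ we reach the coset identity $[ah_1,bh_2]=1$ for all $h_1,h_2\in H$. A short computation then forces $H$ to be abelian: fixing $h_2$, every $ah_1$ centralizes $bh_2$, so $bh_2$ centralizes $\langle aH\rangle\supseteq H$; thus $bH\subseteq C_G(H)$ and hence $H\subseteq\langle bH\rangle\subseteq C_G(H)$.

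The heart of the argument is the next step, where the countable covering must be handled. Choose coset representatives $t_1,\dots,t_n$ of $H$ in $G$. Because $H$ is abelian and normal, for each $i$ the map $h\mapsto[t_i,h]$ is a continuous homomorphism of $H$ into itself, so its image $A_i:=[t_i,H]$ is a closed (hence profinite) abelian subgroup. Crucially, every element of $A_i$ is \emph{literally} a commutator of $G$, so $A_i\subseteq\bigcup_j G_j$ and therefore $A_i=\bigcup_j(A_i\cap G_j)$ is a countable union of closed subgroups of finite rank. Applying Baire's Category Theorem a second time, some $A_i\cap G_{j_0}$ has non-empty interior in $A_i$ and is thus an open subgroup of $A_i$ of finite rank; since an open subgroup of finite index and finite rank forces the ambient profinite group to have finite rank, each $A_i$ has finite rank.

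Finally, $[G,H]$ is generated by the $A_i$: writing an arbitrary $g\in G$ as $g=t_ih'$ one checks that $[g,h]=[t_i,h]\in A_i$. As all $A_i$ lie in the abelian group $H$, we get $[G,H]=A_1\cdots A_n$, a product of finitely many finite-rank subgroups, so $[G,H]$ has finite rank. In $\bar G=G/[G,H]$ the image of $H$ is central and of finite index, so $\bar G/Z(\bar G)$ is finite; by Schur's theorem $\bar G'=G'/[G,H]$ is finite, whence $G'$ is an extension of the finite-rank group $[G,H]$ by a finite group, and hence of finite rank. Undoing the initial reductions (each by a normal subgroup of finite rank) preserves finiteness of the rank of $G'$, which is the assertion.

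The main obstacle, and the point where this situation genuinely differs from the finite-covering Theorem~\ref{444}, is precisely the passage from a countable covering to finite rank: one cannot write the relevant verbal subgroup as a finite product $M_1\cdots M_s$, so the Franciosi--de Giovanni--Kurdachenko rank estimate cannot be invoked in the same way. The device that overcomes this is the reduction to the abelian case, which turns each $[t_i,H]$ into an honest subgroup all of whose elements are commutators, so that a second application of Baire's Category Theorem replaces the finite-product argument. I expect the only delicate bookkeeping to be the verification that the rank of $G'$ stays finite under the successive quotients, but since every kernel involved has finite rank this is routine.
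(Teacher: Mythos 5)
Your proposal is correct and follows essentially the same route as the paper: the paper proves Theorem~\ref{cm2} in detail and notes that Theorem~\ref{cm3} is proved ``in a somewhat similar way,'' and your argument is exactly that adaptation --- Baire's Category Theorem, reduction to the coset identity $[ah_1,bh_2]=1$ forcing $H$ abelian, a second application of Baire to the subgroups $[t_i,H]$ whose elements are genuine commutators, and Schur's theorem at the end. The only (cosmetic) difference is that you use coset representatives $t_1,\dots,t_n$ of $H$ where the paper uses generators $a_1,\dots,a_s$ with $G=\langle H,a_1,\dots,a_s\rangle$, and your bookkeeping for the finite-rank quotient reductions is carried out correctly.
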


We will now describe in detail the proof of Theorem \ref{cm2}. Theorem \ref{cm3} can be proved in a somewhat similar way.
\begin{proof}[Proof of Theorem \ref{cm2}]
Let $G_{1},G_{2}, \ldots$ be the countably many periodic subgroups  whose union contains all commutators $[x,y]$ of $G$. For each integer $i\geq 1$ we set 
\begin{equation*}
S_i=\Big\{(x,y)\in G\times G \mid [x,y] \in G_i\Big\}.
\end{equation*} 
The sets $S_i$ are closed in $G\times G$ and cover the whole of $G\times G$. By Baire's Category Theorem  at least one of these sets contains a non-empty interior. Hence, there exist an open subgroup $H$ of $G$, elements $a,b$ in $G$ and an integer $i$ such that 
\begin{equation*}
[ah_1,bh_{2}]\in G_i  \text{ for all } h_1,h_{2}\in H.
\end{equation*}
Without loss of generality we can assume that the subgroup $H$ is normal. In this case $H$ normalizes the set of all commutators  $[ah_1,bh_{2}]$, where $h_1,h_{2}\in H$. Let $K$ be the subgroup of $G$ generated by all commutators of the form $[ah_1,bh_{2}]$, where $h_1,h_{2}\in H$. Note that  $K\leq G_{i}$. Since the subgroup $G_{i}$ is  locally finite, so is  $K$.  Let $K_{0}=K\cap H$. Then $K_{0}$ is a normal locally finite subgroup of $H$ and the normalizer  of $K_{0}$ in $G$ has finite index. Therefore there are only finitely many conjugates of $K_{0}$ in $G$. Let $K_0,K_1,\dots,K_r$ be the conjugates. All of them are normal in $H$ and so their product $K_{0}K_{1}\ldots K_{r}$ is locally finite. By passing to the quotient $G/K_0K_1\ldots K_r$  we may assume that $K_{0}=1$.  Since $K_{0}=K\cap H$ and $H$ has finite index in $G$, it follows that $K$ is finite.  On the other hand,  the normalizer of $K$ has finite index in $G$ and so the normal closure, say $L$, of $K$ in $G$ is also finite. We can pass to the quotient group $G/L$ and assume that $K=1$. In that case we have $[ah_1,bh_{2}]=1$ for all $h_1,h_{2}\in H$. As we have seen in the proof of Theorem \ref{thm1}  this implies that the subgroup $H$ is abelian.

Since $H$ is open, we can choose finitely many elements $a_{1}, a_{2},\ldots, a_{s}\in G$ such that $G=\langle H, a_{1}, a_{2},\ldots, a_{s} \rangle$. Then $[H,G]=\prod_{1\leq i\leq s}[H,a_{i}].$ Because $H$ is abelian, every element of the subgroup $[H,a_{i}]$ is a commutator. Again by Baire's Category Theorem  $[H,a_{i}]$ contains an open subgroup $M$ and an element $c$ such that $cM$ is contained in $G_{j}$, for some index $j$. So $M$ is locally finite  and it follows that $[H,a_{i}]$ is locally finite as well. Since we can repeat the same argument for every $i=1,\ldots,s$, we conclude that $[H,G]$ is locally finite. We  now pass to the quotient $G/[H,G]$ and assume that $[H,G]=1$. Then $H\leq Z(G)$ and by Schur's Theorem $G'$ is finite. The proof is complete.
\end{proof}


\end{document}